\def\qed{\hfill {\hbox{${\vcenter{\vbox{               
   \hrule height 0.4pt\hbox{\vrule width 0.4pt height 6pt
   \kern5pt\vrule width 0.4pt}\hrule height 0.4pt}}}$}}}
\def\utr{\underline{\ast}}
\def\otr{\overline{\ast}}
\newtheorem{theorem}{Theorem}
\theoremstyle{definition}
\newtheorem{example}{Example}
\newtheorem{definition}{Definition}
\newtheorem{remark}{Remark}
\date{}
\title{\Large \textbf{Bikei Module Invariants of Unoriented Surface-Links}}
\author{Yewon Joung\footnote{Email: yewon112@gmail.com The first  author was supported by Basic Science Research Program through the National Research Foundation of Korea (NRF) funded by the Ministry of Education, Science and Technology (NRF-2019R1F1A1060205).}\and Sam Nelson\footnote{Email: Sam.Nelson@cmc.edu. Partially supported by Simons Foundation collaboration grant 702597.}\\
}
\begin{document}
\maketitle

\begin{abstract}
We extend our previous work from \cite{JN1} on biquandle module invariants of 
oriented surface-links to the case of unoriented surface-links using \textit{bikei 
modules}. The resulting infinite family of enhanced invariants proves be
effective at distinguishing unoriented and especially non-orientable 
surface-links; in particular, we show that these invariants are more 
effective than the bikei homset cardinality invariant alone at distinguishing 
non-orientable surface-links. Moreover, as another application we note that 
our previous biquandle modules which do not satisfy the bikei module axioms 
are capable of distinguishing different choices of orientation for orientable 
surface-links as well as classical and virtual links.
\end{abstract}

\parbox{5.5in} {\textsc{Keywords:} surface-links, marked graph diagrams, 
bikei, enhancements

\smallskip

\textsc{2020 MSC:} 57K12}

\section{\large\textbf{Introduction}}\label{I}

\textit{Surface-links} are compact surfaces which may be knotted and linked in 
$\mathbb{R}^4$. Surface-links may be represented combinatorially 
via several diagrammatic systems such as  \textit{broken surface diagrams} 
drawn as immersions of surfaces into $\mathbb{R}^3$ with breaks to indicate 
crossing information, graphs representing the singular set of the embedding 
enhanced with relative height information known as \textit{braid charts},
and ordered sequences of link diagrams representing horizontal cross-sections
of the surface-link with respect to a choice of vertical direction known as
\textit{movie diagrams}. If we isotope the surface-link to have all of its
maxima at one level, minima at another and saddle points at the same 
intermediate level, the cross-section at the saddle-level is a knotted 
4-valent graph whose vertices represent the saddle points. Marking these to 
indicate the direction of the saddle encodes all the information necessary to
recover the full surface-link, as shown by Lomanaco in \cite{L}. Such a diagram
is known as a \textit{marked vertex diagram}, \textit{ch-diagram} or 
\textit{marked graph diagram.} 
The combinatorial moves on marked graph diagrams encoding ambient isotopy
in $\mathbb{R}^4$ are known as the \textit{Yoshikawa moves}; see e.g. 
\cite{KJL, KJL2} for more.

\textit{Bikei} are algebraic structures with axioms encoding the 
Reidemeister moves for unoriented knots and links. Bikei form a special
case of \textit{biquandles} on the one hand and generalize \textit{kei},
also known as \textit{involutory quandles}, on the other. Bikei were
introduced in \cite{AN}; our notation in this paper comes from \cite{EN}.

\textit{Biquandle modules} are algebraic structures associated to biquandles
in which each order pair of biquandle elements determines an Alexander 
biquandle-style pair of operations on a commutative ring $R$ with identity.
The axioms are chosen so that a biquandle module over a biquandle $X$
associates an invariant $R$-module to each biquandle homomorphism 
$f:\mathcal{B}(L)\to X$ from the fundamental biquandle of an oriented link 
$L$ to a finite target biquandle $X$. The multiset of these modules is then
an enhanced invariant whose cardinality determines the biquandle counting 
invariant but is in general a stronger invariant. See \cite{JN1} for more.

In \cite{JN1} the authors used biquandle modules to define invariants of 
oriented surface-links. In particular, the use of orientation is critical for 
the definition of these invariants. For classical knots and links this is not a 
problem since every classical link has at least one orientation. However, 
surface-links include both orientable and non-orientable cases. 
In this paper we adapt the biquandle module idea to the case of non-orientable
surface-links by defining a notion of \textit{bikei modules}, the unoriented
version of biquandle modules.

The paper is organized as follows. In Section \ref{KOS} we review the basics
of surface-link theory and marked graph diagrams. In Section \ref{B} we recall
bikei and bikei colorings of surface-links. We then define the notion of bikei
modules and give some examples. In Section \ref{BE} we use bikei modules to 
define enhanced invariants of unoriented surface-links and provide some 
computational illustrations and examples. We conclude in Section \ref{Q} with
some questions for future research.

\section{\large\textbf{Surface-Links}}\label{KOS}

We begin with a definition.

\begin{definition}
A \textit{surface-link} is a closed 2-manifold smoothly (or piecewise 
linearly and locally flatly) embedded in the 4-space $\mathbb R^4$.
Two surface-links are \textit{equivalent} if they are ambient isotopic. 
A \textit{non-oriented surface-link} is a non-orientable surface-link 
or an orientable surface-link without orientation, while an \textit{oriented 
surface-link} is an orientable surface-link with a fixed orientation.
\end{definition}

We can specify non-oriented surface-links using \textit{marked graph diagrams},
unoriented 4-valent spatial graphs with vertices marked with a small bar, in
the following way. At each vertex we smooth the vertices in both ways to obtain
two link diagrams connected by a surface (a cobordism between the links) 
with saddle points at the marked vertices. 

\[\includegraphics{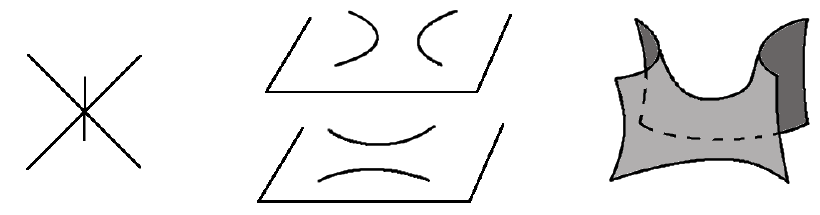}\] 

If the links are unlinks, we can
cap them off with disks to obtain a closed surface-link; such marked graph 
diagrams are called \textit{admissible}. Diagrams which are not admissible
represent cobordisms between the links represented by the smoothed diagrams.

\begin{example}
The marked graph diagram
\[\includegraphics{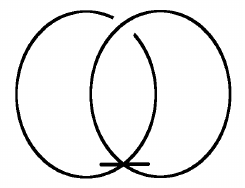}\] 
represents an unknotted real projective plane. Smoothing the marked vertex
in both ways and connecting yields a cobordism
\[\includegraphics{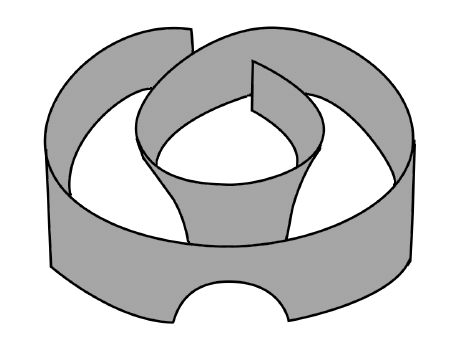}\] 
between two unknots with a single crossing each; these can be capped off with 
Whitney's umbrellas to complete the projective plane.
\end{example}

The moves on marked graph diagrams encoding ambient isotopy of the
represented surface are known as \textit{Yoshikawa moves}:
\[\scalebox{0.9}{\raisebox{0.4in}{\includegraphics{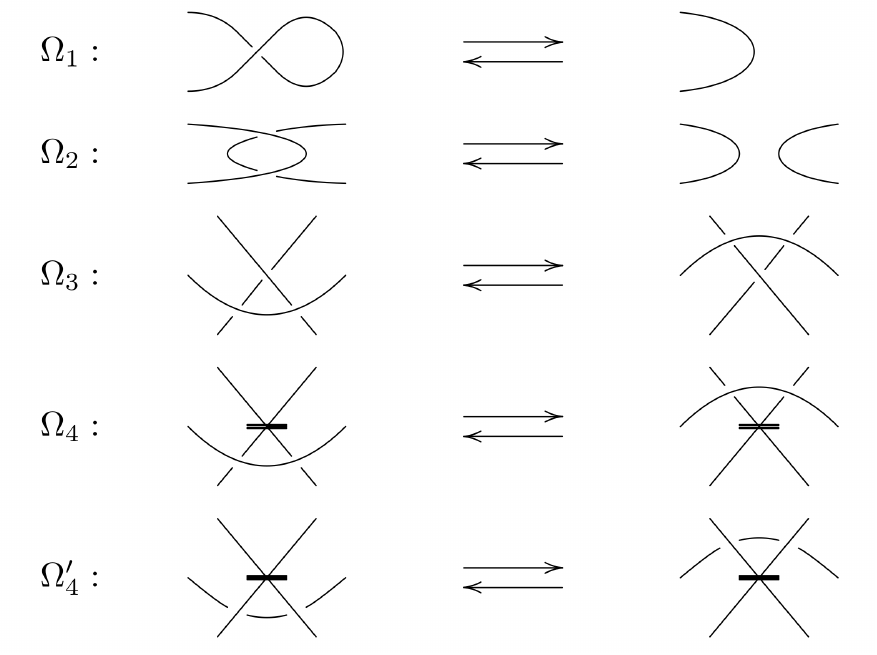}}
\includegraphics{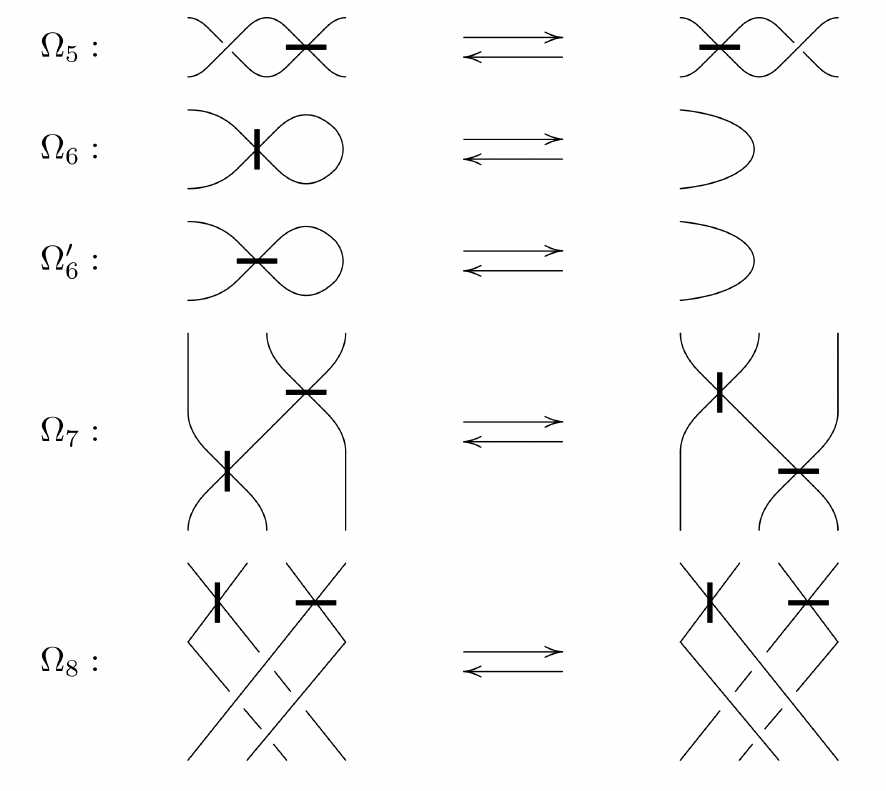}}\]
See \cite{KJL, KJL2} for more about these moves.

\section{\large\textbf{Bikei}}\label{B}

In this section we recall \textit{bikei} and the biquandle counting 
invariant for surface-links, including non-orientable surface-links.

\begin{definition}
Let $X$ be a set. A \textit{bikei structure} on $X$ consists of two 
binary operations $\otr,\utr$ on $X$ such that
\begin{itemize}
\item[(i)] For all $x\in X$, we have $x\utr x=x\otr x$,
\item[(ii)] For all $x,y\in X$ we have
\[\begin{array}{rcl}
(x\utr y)\utr y & = & x \\
(x\otr y)\otr y & = & x \\
x\utr (y\otr x) & = & x\utr y \\
x\otr (y\utr x) & = & x\otr y \\
\end{array}\]
and
\item[(iii)] For all $x,y,z\in X$, the \textit{exchange laws} are satisfied:
\[\begin{array}{rcl}
(x\utr y)\utr(z\utr y) & = & (x\utr z)\utr (y\otr z), \\
(x\utr y)\otr(z\utr y) & = & (x\otr z)\utr (y\otr z), \ \mathrm{and} \\
(x\otr y)\otr(z\otr y) & = & (x\otr z)\otr (y\utr z).
\end{array}\]
\end{itemize}
A bikei in which $x\otr y=x$ for all $x,y$ is called a \textit{kei} or
\textit{involutory quandle}.
\end{definition}

\begin{example}
Let $G$ be a group. The operations
\[ x\utr y=yx^{-1}y \quad \mathrm{and}\quad x\otr y=x\]
define a bikei structure on $X$ known as a \textit{Takasaki kei}, 
\textit{cyclic kei} or \textit{dihedral quandle} depending on context.
\end{example}

\begin{example}
Let $R$ be a commutative ring with identity and let $X$ be an $R$-module
with $t,r,s\in R$ such that $t^2=r^2=1$, $s(t+r)=0$ and $r=t+s$.
Then $X$ is a bikei known as an \textit{Alexander bikei} under the operations
\[x\utr y=tx+sy \quad\mathrm{and}\quad x\otr y= rx. \] 
\end{example}

\begin{example}
We can specify a bikei structure on a set $\{1,2,\dots,n\}$ with pair 
of operation tables (or more succinctly as a block matrix of indices).
For example the smallest non-trivial bikei has underlying set
$X=\{1,2\}$ with operations given by 
\[
\begin{array}{r|rr}
\utr & 1 & 2 \\ \hline
1 & 2 & 2\\
2 & 1 & 1
\end{array} \quad
\begin{array}{r|rr}
\otr & 1 & 2 \\ \hline
1 & 2 & 2\\
2 & 1 & 1
\end{array} \quad
\]
or more compactly by the block matrix 
\[\left[\begin{array}{rr|rr}
2 & 2 & 2 & 2 \\
1 & 1 & 1 & 1
\end{array}\right].\]
\end{example}

\begin{example}
Let $L$ be an unoriented surface-link represented by a marked graph
diagram $D$. The \textit{fundamental bikei} of $L$ has a presentation
with a generator for each semiarc and a crossing relation at each crossing,
with semiarcs meeting at a marked vertex having the same generator as shown
\[\includegraphics{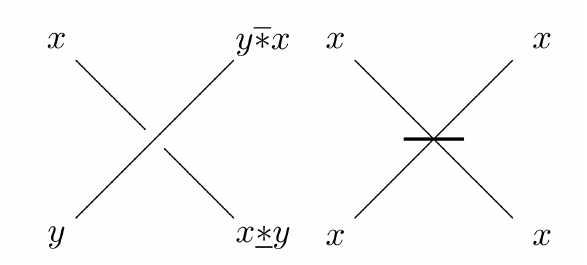}.\]
The elements of the fundamental bikei are equivalence classes of bikei words
in these generators (i.e. obtained recursively as $w\utr z$ or $w\otr z$
where generators are words) modulo the equivalence relation generated by
the bikei axioms and the crossing relations. 
\end{example}

The bikei axioms are chosen
precisely so that Yoshikawa moves (indeed, the first three moves, known
as the \textit{Reidemeister moves} in classical knot theory, are sufficient)
on diagrams induce Tietze moves on the presented fundamental bikei.  See 
\cite{EN} for more; here we show that Yoshikawa moves $\Omega_4$ through 
$\Omega_8$ do not impose any additional conditions.
\[\scalebox{0.85}{\raisebox{0.2in}{\includegraphics{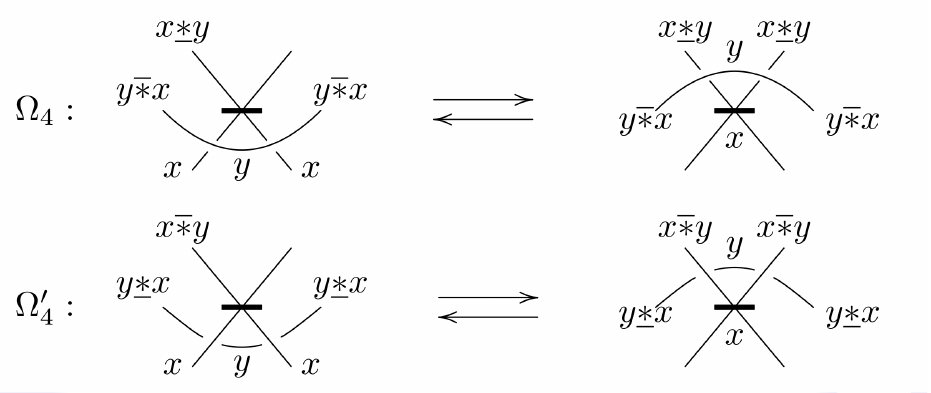}}
\quad \includegraphics{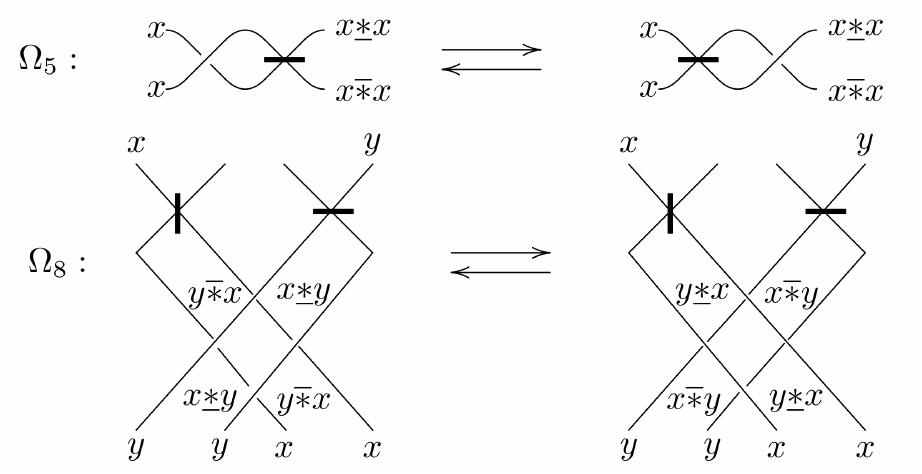}}\]

It follows that the fundamental bikei is an invariant of surface-links. However,
directly comparing objects specified by presentations is difficult; thus,
we want to indirectly compare these fundamental bikei by comparing their 
homsets onto a choice of finite bikei.

\begin{definition}
A map $f:X\to Y$ between bikei is a \textit{bikei homomorphism} if for all
$x,x'\in X$ we have
\[f(x\utr x')=f(x)\utr f(x')
\quad\mathrm{and}\quad
f(x\otr x')=f(x)\otr f(x').
\]
\end{definition}

\begin{definition}
Let $L$ be an unoriented surface-link and $X$ a bikei. 
A \textit{bikei coloring} of a diagram of $L$ by $X$ is an assignment of
elements in $X$ to the semiarcs of $L$ such that the crossing relations
are satisfied at every crossing.
\end{definition}

A bikei coloring uniquely determines and is determined by a bikei homomorphism 
$f:\mathcal{B}(L)\to X$. Thus a marked graph diagram gives us a way
to combinatorially (and visually) represent elements of the homset 
$\mathrm{Hom}(\mathcal{B},X)$ analogously to the way choosing a basis for
a vector space lets us concretely represent homsets in the vector space 
category. In particular, the cardinality of the homset is an integer-valued
invariant of unoriented surface-links known as the \textit{bikei counting
invariant}, denoted $\Phi_X^{\mathbb{Z}}(L)=|\mathrm{Hom}(\mathcal{B}(L),X)|$.

\begin{example}
The unknotted projective plane $2^{-1}_1$ with diagram
\[\includegraphics{yj-sn2021-8.pdf}\]
has no colorings by the bikei $X_1=\{1,2\}$ with operations
\[
\begin{array}{r|rr}
\utr & 1 & 2 \\ \hline
1 & 2 & 2\\
2 & 1 & 1
\end{array} \quad
\begin{array}{r|rr}
\otr & 1 & 2 \\ \hline
1 & 2 & 2\\
2 & 1 & 1
\end{array}
\]
since at the marked vertex all four colors must be equal but at the crossing
colors must change from 1 to 2 or 2 to 1; hence, we have 
$\Phi_{X_1}^{\mathbb{Z}}(2^{-1}_1)=0$. On the other hand, the same diagram 
has two colorings by the other bikei structure $X_2$ on the set of two elements,
\[
\begin{array}{r|rr}
\utr & 1 & 2 \\ \hline
1 & 1 & 1\\
2 & 2 & 2
\end{array} \quad
\begin{array}{r|rr}
\otr & 1 & 2 \\ \hline
1 & 1 & 1\\
2 & 2 & 2
\end{array}
\]
and we have $\Phi_{X_2}^{\mathbb{Z}}(2^{-1}_1)=2$.
\end{example}

\begin{example}\label{ex:8s}
Let $X$ be the bikei structure on the $\{1,2,3,4\}$ specified by the
operation tables
\[
\begin{array}{r|rrrr} 
\utr & 1 & 2 & 3 & 4 \\ \hline
1 & 3 & 1 & 3 & 1 \\
3 & 2 & 4 & 4 & 4 \\
3 & 1 & 3 & 1 & 3 \\
4 & 4 & 2 & 2 & 2 
\end{array}\quad
\begin{array}{r|rrrr} 
\otr & 1 & 2 & 3 & 4 \\ \hline
1 & 3 & 1 & 3 & 1 \\
2 & 4 & 4 & 2 & 4 \\
3 & 1 & 3 & 1 & 3 \\
4 & 2 & 2 & 4 & 2.
\end{array}
\]
The non-orientable surface-links $8^{-1,-1}_1$ and $9^{1,-2}_1$ are distinguished 
from each other by their counting invariants with respect to $X$, with
$\Phi_{X_2}^{\mathbb{Z}}(8^{-1,-1}_1)=0\ne 4=\Phi_{X}^{\mathbb{Z}}(9^{1,-2}_1)$.
\[\includegraphics{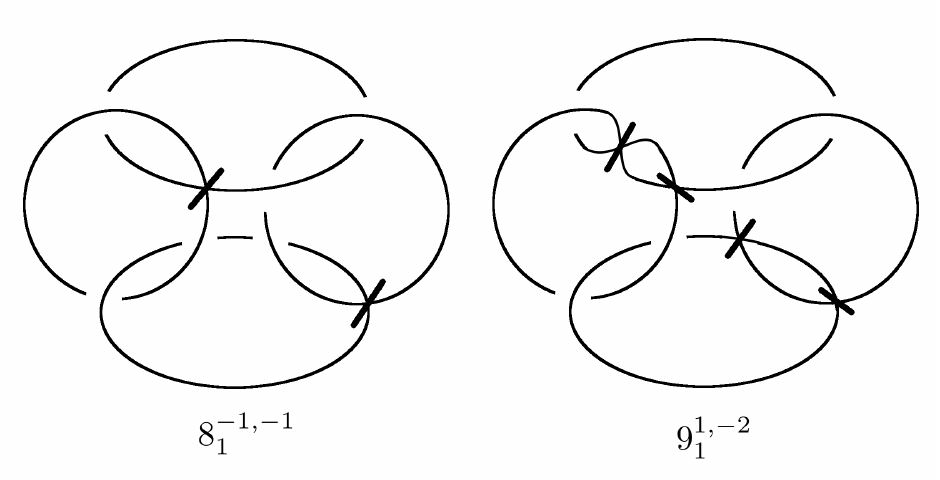}\]
\end{example}

\section{\large\textbf{Bikei Module Enhancements}}\label{BE}

We begin this section with a generalization of a definition from \cite{JN1}.

\begin{definition}
Let $X$ be a bikei and $R$ a commutative ring with identity.
A \textit{bikei module} is an assignment
of elements $t_{x,y}, r_{x,y}$ and $s_{x,y}$ in $R$ to each pair of elements
in $X$ such that for all $x,y,z\in X$ we have
\[\begin{array}{rcll}
t_{x,y}t_{x\utr y, y\otr x} & = & 1 & (0.i)\\
r_{x,y}r_{x\utr y, y\otr x} & = & 1 & (0.ii)\\
(t_{x,y}+r_{x,y})s_{x\utr y, y\otr x} & = & 0 & (0.iii) \\
t_{x,x}+s_{x,x} & = & r_{x,x}, & (i.i) \\
r_{y\otr x,z\otr x} r_{x,z} & = & r_{x\utr y, z\otr y}r_{y,z}, & (iii.i) \\
r_{x\utr z,y\utr z} t_{y,z} & = & t_{y\otr x, z\otr x}r_{x,y}, & (iii.ii) \\
r_{x\utr z,y\utr z} s_{y,z} & = & s_{y\otr x, z\otr x}r_{x,z}, & (iii.iii) \\
t_{x\utr z,y\utr z} t_{x,z} & = & t_{x\utr y, z\otr y}t_{x,y}, & (iii.iv) \\
s_{x\utr z,y\utr z} t_{y,z} & = & t_{x\utr y, z\otr y}s_{x,y}, & (iii.v) \\
t_{x\utr z,y\utr z} s_{x,z} +s_{x\utr z, y\utr z}s_{y,z} & = & s_{x\utr y, z\otr y}r_{y,z}.  & (iii.vi)
\end{array}\]
\end{definition}

We can specify a bikei module using a block matrix $[T|S|R]$ whose
blocks have $t_{j,k}$, $s_{j,k}$ and $r_{j,k}$ respectively in row $j$
column $k$.

\begin{example}
Using our \texttt{python} code, we found 512 bikei module structures 
on the bikei
\[
\begin{array}{r|rr}
\utr & 1 & 2 \\ \hline
1 & 2 & 2\\
2 & 1 & 1
\end{array} \quad
\begin{array}{r|rr}
\otr & 1 & 2 \\ \hline
1 & 2 & 2\\
2 & 1 & 1
\end{array}
\]
with coefficients in $R=\mathbb{Z}_8$, including for instance
\[\left[\begin{array}{rr|rr|rr}
3 & 7 & 4 & 0 & 7 & 5 \\
7 & 3 & 0 & 4 & 5 & 7
\end{array}\right].\]
\end{example}

The motivation is as follows: 
Given an element $f$ of the bikei homset represented by a bikei coloring of 
a marked graph diagram, we would like to define an invariant $R$-module
by modifying the Alexander bikei operations to have coefficients which
are functions of the bikei colors at each crossing. Assigning a module 
generator to each semiarc, we can picture these generators as ``beads''.
\[\includegraphics{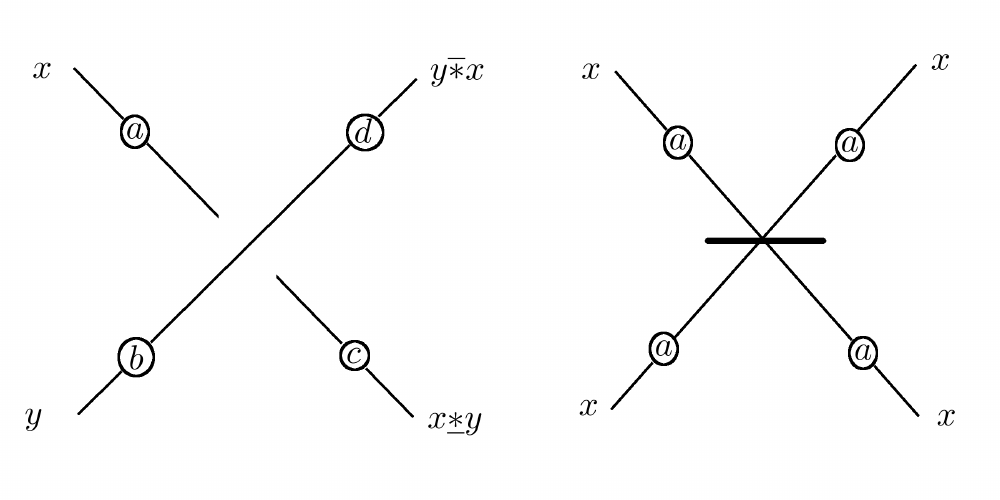}\]
We then define the module $M_f$ as the kernel of the coefficient matrix 
of the homogeneous determined by the system of linear equations including
\[\begin{array}{rcl}
0 & = & t_{x,y} a+s_{x,y} b -d \\
0 & = & r_{x,y} b -c
\end{array}\]
at each crossing. The elements of $M_f$ can be represented as 
\textit{bead colorings} of the $X$-colored diagrams, i.e. assignments of
elements of $R$ to each semiarc in the $X$-colored diagram satisfying the 
above-pictured condition at every crossing. The bikei module axioms are the 
conditions on the coefficients coming from the bikei-colored Yoshikawa moves 
(again, the first three, i.e. the Reidemeister moves, are sufficient) together 
with the 180-degree rotational symmetry required by the lack of orientation.

A choice of bikei module over a bikei $X$ with coefficients in $R$ associates
an invariant $R$-module to each $X$-coloring of a diagram $D$ of an unoriented 
surface-link $L$. More precisely, Yoshikawa moves induce $R$-module 
isomorphisms on these modules, and consequentially the multiset of such 
$R$-modules over the homset $\mathrm{Hom}(\mathcal{BK}(L),X)$ is an invariant
of unoriented surface-links. 

We illustrate the computation of the invariant with an example.
\begin{example}
Let $X=\{1,2\}$ have the trivial bikei structure, i.e. the structure 
specified by the operation tables
\[
\begin{array}{r|rrrr}
\utr & 1 & 2\\ \hline
1 & 1 & 1 \\
2 & 2 & 2 \\
\end{array}
\quad
\begin{array}{r|rr}
\otr & 1 & 2 \\ \hline
1 & 1 & 1  \\
2 & 2 & 2  \\
\end{array}
\]
and let $R=\mathbb{Z}_5$; then $X$ has $R$-module structures including
\[\left[\begin{array}{rr|rr|rr}
1 & 4 & 0 & 2 & 1 & 1 \\
1 & 4 & 0 & 0 & 4 & 4
\end{array}\right].\]
To compute the invariant for the unknotted projective plane $2^{-1}_1$, 
we note that there are two $X$-colorings of a marked graph diagram representing
$2^{-1}_1$ as shown:
\[\includegraphics{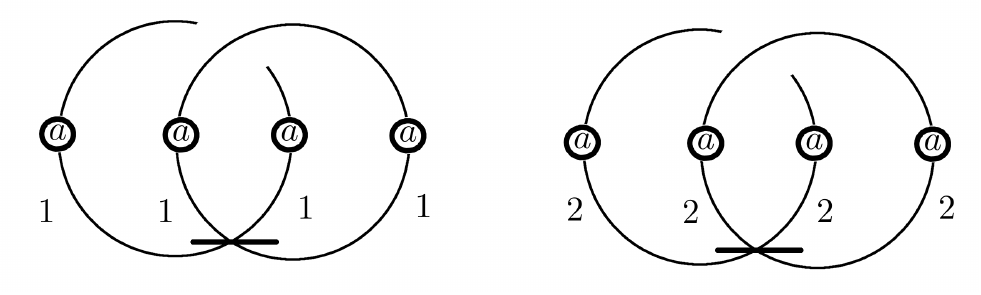}\]
These then give us the systems of bead coloring equations
\[
\begin{array}{rcl}
a & = & r_{11}a \\
a & = & t_{11}a+s_{11} a
\end{array}
\quad
\begin{array}{rcl}
a & = & r_{22}a \\
a & = & t_{22}a+s_{22} a
\end{array}
\]
which become
\[
\begin{array}{rcl}
a & = & 1a \\
a & = & 1a+0a
\end{array}
\quad
\begin{array}{rcl}
a & = & 4a \\
a & = & 1a+0 a
\end{array}
\]
so the associated $R$-modules are respectively the kernels of the matrices
$\left[\begin{array}{r}0 \\ 0\end{array}\right]$ and 
$\left[\begin{array}{r}1 \\ 0\end{array}\right]$,
and the invariant value is the multiset 
$\{(\mathbb{Z}_5)^0,(\mathbb{Z}_5)^1\}$. 

\end{example}

For ease of comparison, we can define a polynomial invariant from this
homset by summing a formal variable $u$ raised to the number of elements 
of each module in the case that $R$ is finite (that is, the number of bead 
colorings of each $X$-colored diagram) or raised to the rank of the module
in case $R$ is infinite. More formally, we have:

\begin{definition}
Let $X$ be a finite bikei, $L$ an unoriented surface-link represented by
a marked graph diagram $D$, $R$ a commutative ring with identity and $M$
a choice of $X$-module with coefficients in $R$. For each homset element
$f:\mathcal{BK}(L)\to X$, let $D_f$ be the corresponding $X$-coloring of
$D$, and let $M_f$ be the $R$-module of bead colorings of $D_f$ with $M$
coefficients. We then define the \textit{bikei module enhanced polynomial}
of $L$ to be
\[\Phi_X^{M}(L)=
\left\{\begin{array}{ll}
\sum_{f\in\mathrm{Hom}(\mathcal{BK}(L),X)} u^{|M_f|}& |R|\in \mathbb{N} \\
\sum_{f\in\mathrm{Hom}(\mathcal{BK}(L),X)} u^{\mathrm{Rank}(M_f)} & \mathrm{otherwise} 
\end{array}.\right.
\]
\end{definition}

\begin{theorem}
The polynomial $\Phi_X^{M}(L)$ is an invariant of surface-links.
\end{theorem}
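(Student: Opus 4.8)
The plan is to reduce invariance of $\Phi_X^M(L)$ to checking that its value is unchanged by each of the Yoshikawa moves $\Omega_1,\dots,\Omega_8$, since two marked graph diagrams represent ambient isotopic surface-links precisely when they are related by a finite sequence of such moves. Because $\Phi_X^M(L)$ is a sum over the homset $\mathrm{Hom}(\mathcal{BK}(L),X)$ of a quantity ($u^{|M_f|}$ or $u^{\mathrm{Rank}(M_f)}$) that depends only on the isomorphism type of the bead-coloring module $M_f$, it suffices to exhibit, for each move, a bijection between the homset elements before and after the move under which the associated $R$-modules are isomorphic. Given such a bijection $f\leftrightarrow f'$ with $M_f\cong M_{f'}$, we have $|M_f|=|M_{f'}|$ and $\mathrm{Rank}(M_f)=\mathrm{Rank}(M_{f'})$, so the two polynomials agree term by term.

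First I would establish the bijection on homsets. This is immediate from the material already developed: each Yoshikawa move induces a Tietze transformation on the presentation of the fundamental bikei $\mathcal{BK}(L)$, so the fundamental bikei, and hence its homset onto any fixed finite target $X$, is preserved. Concretely, for a coloring $D_f$ of the diagram before a move there is a unique coloring $D_{f'}$ of the diagram after the move agreeing with $D_f$ outside the region of the move, and $f\mapsto f'$ is the desired bijection.

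Next, for each corresponding pair $(f,f')$ I would prove $M_f\cong M_{f'}$ by comparing the two systems of bead-coloring equations. Since the bead equations occur only at crossings, while each marked vertex merely identifies the four bead variables on its incident semiarcs (exactly as it identifies the four bikei colors, so that all four must be equal), the moves $\Omega_4$ through $\Omega_8$ rearrange marked vertices and strands without creating new crossing relations; they therefore carry one presentation of the module to the other by an invertible change of bead variables, inducing $R$-module isomorphisms. That these moves impose no new conditions is reflected in the fact that no module axiom is indexed to them. For the Reidemeister moves $\Omega_1,\Omega_2,\Omega_3$ the verification is the heart of the argument: one writes the bead equations on both sides, eliminates the variables on the interior semiarcs that appear or disappear under the move, and checks that the induced relations on the boundary beads coincide. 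The bikei module axioms are precisely the coefficient identities making this comparison hold, with $(0.i)$--$(0.iii)$ corresponding to $\Omega_2$, axiom $(i.i)$ to $\Omega_1$, and $(iii.i)$--$(iii.vi)$ to $\Omega_3$, while the invariance of each relation under the $180^\circ$ rotation forced by the absence of orientation is what distinguishes these axioms from their oriented biquandle-module counterparts in \cite{JN1}.

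The main obstacle I anticipate is the $\Omega_3$ verification in the unoriented setting. In the oriented case of \cite{JN1} one fixes an orientation and checks a single representative of the triple-point move; here, with no orientation available, the several rotational variants must all be reconciled simultaneously, and matching the two sides for every variant is exactly what produces the six relations $(iii.i)$--$(iii.vi)$ together with the symmetry encoded in the subscripts $x\utr y,\,y\otr x$ of the coefficients. Once these identities are confirmed, the invertible substitution giving $M_f\cong M_{f'}$ follows routinely, and summing $u^{|M_f|}$ (respectively $u^{\mathrm{Rank}(M_f)}$) over the homset yields equality of $\Phi_X^M$ before and after each move, completing the proof.
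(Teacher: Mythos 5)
Your proposal is correct and follows essentially the same approach as the paper, whose proof consists of the single observation that the bikei module axioms are exactly the conditions for invariance of $M_f$ under a generating set of unoriented Yoshikawa moves. Your write-up simply expands that sentence into its constituent steps (reduction to Yoshikawa moves, the homset bijection, and the move-by-move comparison of bead-coloring modules), with only a minor quibble that the axioms $(0.i)$--$(0.iii)$ are most naturally read as coming from the $180^\circ$ rotational symmetry of an unoriented crossing rather than from $\Omega_2$ alone.
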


\begin{proof}
The conditions defining bikei modules are the conditions required for 
invariance of $M_f$ under a generating set of unoriented Yoshikawa moves.
\end{proof}

\begin{example}\label{ex:proper}
Let $X=\{1,2,3,4\}$ have bikei structure defined by the operation tables
\[
\begin{array}{r|rrrr}
\utr & 1 & 2 & 3 & 4 \\ \hline
1 & 1 & 1 & 2 & 2 \\
2 & 2 & 2 & 1 & 1 \\
3 & 4 & 4 & 3 & 3 \\
4 & 3 & 3 & 4 & 4
\end{array}
\quad
\begin{array}{r|rrrr}
\otr & 1 & 2 & 3 & 4 \\ \hline
1 & 1 & 1 & 2 & 2 \\
2 & 2 & 2 & 1 & 1 \\
3 & 3 & 3 & 3 & 3 \\
4 & 4 & 4 & 4 & 4
\end{array}
\]
and let $R=\mathbb{Z}_3$. Then the matrix of $t,s,r$ coefficients
\[
\left[
\begin{array}{rrrr|rrrr|rrrr}
2 & 2 & 1 & 1 & 2 & 2 & 0 & 0 & 1 & 1 & 1 & 1 \\
2 & 2 & 1 & 1 & 2 & 2 & 0 & 0 & 1 & 1 & 1 & 1 \\
2 & 1 & 2 & 2 & 0 & 0 & 0 & 0 & 1 & 1 & 2 & 1 \\
1 & 2 & 1 & 1 & 0 & 0 & 0 & 0 & 1 & 1 & 2 & 1
\end{array}
\right]
\]
defines a bikei module over $X$ with $R$ coefficients. 

The non-orientable surface-links $8^{-1,-1}_1$ and $9^{1,-2}_1$ both have
bikei homsets of cardinality 8, so the links are not distinguished by the 
counting invariant alone with respect to this bikei. However, the
bikei module enhanced invariants with respect to the listed bikei module
are
\[\Phi_{X}^M(8^{-1,-1}_1)=5u+2u^3+u^9\]
and
\[\Phi_{X}^M(9^{1,-2}_1)=2u+4u^3+2u^9;\]
hence the surface-links are distinguished by the enhanced invariant.
In particular this example shows that the enhanced invariant is not
determined by the size of the homset and hence is a proper enhancement.
\end{example}

\begin{example}
We computed the bikei module enhanced invariant for the non-orientable 
surface-links with $ch$-index up to 10 as listed in \cite{Y} with respect 
to the bikei and bikei module in example \ref{ex:proper}; the results are in 
the table.

\[\begin{array}{r|l}
\Phi_X^M(L) & L \\ \hline
3u+u^3 & 2^{-1}_1, 10^{0,-2}_1, 10^{-1,-1}_1, 10^{-1,-1}_2 \\
5u+2u^3+u^9 & 8^{-1,-1}_1, 10^{-2,-2}_1 \\
2u+4u^3+2u^9 & 9^{1,-2}_1 \\
6u+4u^3+2u^9 & 7^{0,-2}_1.
\end{array}\]
\end{example}

\begin{example}
Let $X=\{1,2,3,4\}$ have bikei structure defined by the operation tables
\[
\begin{array}{r|rrrr}
\utr & 1 & 2 & 3 & 4 \\ \hline
1 & 1 & 1 & 2 & 2 \\
2 & 2 & 2 & 1 & 1 \\
3 & 4 & 4 & 3 & 3 \\
4 & 3 & 3 & 4 & 4
\end{array}
\quad
\begin{array}{r|rrrr}
\otr & 1 & 2 & 3 & 4 \\ \hline
1 & 1 & 1 & 2 & 2 \\
2 & 2 & 2 & 1 & 1 \\
3 & 3 & 3 & 3 & 3 \\
4 & 4 & 4 & 4 & 4
\end{array}
\]
and let $R=\mathbb{Z}_3$. Then the matrix of $t,s,r$ coefficients
\[
\left[
\begin{array}{rrrr|rrrr|rrrr}
1 & 2 & 1 & 1 & 1 & 0 & 0 & 0 & 2 & 2 & 2 & 2 \\
2 & 1 & 1 & 1 & 0 & 1 & 0 & 0 & 2 & 2 & 2 & 2 \\
2 & 1 & 2 & 2 & 0 & 0 & 0 & 0 & 1 & 1 & 2 & 1 \\
1 & 2 & 1 & 1 & 0 & 0 & 0 & 0 & 1 & 1 & 2 & 1
\end{array}
\right]
\]
defines a bikei module over $X$ with $R$ coefficients. 
We computed the bikei module enhanced invariant for the orientable and 
non-orientable 
surface-links with $ch$-index up to 10 as listed in \cite{Y}; the results are in 
the table. 



\[\begin{array}{r|l||r|l}
\Phi_X^M(L) & L _{ori} &\Phi_X^M(L) & L _{non-ori} \\ \hline \hline
4u^3 & 2_1,  10_1, 10_3           & 3u+u^3 & 2^{-1}_1, 10^{0,-2}_1, 10^{0,-2}_2, 10^{-1,-1}_1 \\
2u^3+10u^9 & 6^{0,1}_1            &  u+6u^3+u^9 & 8^{-1,-1}_1, 10^{-2,-2}_1\\
2u^3+2u^9 & 8_1, 9_1, 10_2, 10^1_1 & 6u^3+2u^9 & 9^{1,-2}_1\\
2u^3+6u^9 & 10^{1,1}_1            & 10u^3+2u^9 & 7^{0,-2}_1\\
2u^3+14u^9 & 9^{0,1}_1, 10^{0,1}_2 & \\
2u+4u^3+10u^9 & 9^{0,1}_2 & \\
4u^3+4u^9 & 8^{1,1}_1 & \\
6u^3+10u^9 & 10^{0,1}_1 & \\
18u^9+30u^{27} & 10^{0,0,1}_1. & 
\end{array}\]

\end{example}

\begin{remark}
In \cite{JKL}, non-orientable surface-links could not be distinguished using
ideal coset invariants, so these examples show that the method of this paper 
represents an improvement over the methods of \cite{JKL}.
\end{remark}

\begin{remark}
Though we have developed bikei modules as invariants of surface-links, 
the invariants they define also apply to unoriented classical and virtual knots
and links, which may be regarded as trivial cobordisms between two copies
of the classical or virtual knot or link. 
In particular, every bikei module is a biquandle module as defined in 
\cite{JN1} satisfying some extra conditions; hence the set of bikei modules
of a bikei $X$ is a subset of the set of biquandle modules of $X$. The 
biquandle modules over $X$ which are not bikei modules are those that
may be sensitive to orientation; thus, to find invariants which can 
potentially distinguish knots from their reverses, we can use biquandle 
module invariants using biquandle modules which are not bikei modules.
\end{remark}

\section{\large\textbf{Questions and Future Directions}}\label{Q}

We have considered only bikei modules over finite rings of the form 
$\mathbb{Z}_n$ for simplicity of computation using \texttt{python}. 
Bikei modules over infinite rings, especially polynomial rings, 
are of great interest as generalizations of the Alexander invariant
for these non-orientable surface-links. Our method of exhaustive computer 
search over finite rings obviously does not work for these cases, so
developing other methods of finding bikei modules is of definite interest.

What is the relationship between bikei modules and other bikei invariants?
Do bikei modules have an interpretation as bikei extensions analogous to
abelian extensions of quandles by cocycles as in \cite{CENS}?

A more computational question: we searched many examples of bikei modules
over finite rings but could not distinguish the surface-links $8^{-1,-1}_1$ 
and $10^{-2,-2}_1$. We 
conjecture that this is a consequence of our computational limitation to
small cardinality bikei and rings, and ask what is the smallest
bikei $X$ and bikei module $M$ over a finite ring $R$ such that
\[\Phi_{X}^M(8^{-1,-1}_1)\ne\Phi_{X}^M(10^{-2,-2}_1).\]

\bibliography{yj-sn}{}
\bibliographystyle{abbrv}

\noindent
\textsc{Institute for Mathematical Sciences \\
Pusan National University \\
Busan 46241, Korea.} \\

\noindent
\textsc{Department of Mathematical Sciences \\
Claremont McKenna College \\
850 Columbia Ave. \\
Claremont, CA 91711}

\end{document}